\newcommand{\norm}[1]{\left\lVert #1 \right\rVert}
\newcommand{\hati}{\hat{I}}
\newtheorem{theorem}{Theorem}[section]
\newtheorem{lemma}{Lemma}[section]
\newtheorem{corollary}{Corollary}[section]  
\begin{document}

\title{An Explicit Upper Bound for Modulus of 
    Divided Difference on A Jordan Arc in the Complex Plane}
\author{Difeng Cai}


\maketitle

\begin{abstract}
    An explicit upper bound is derived for the modulus of divided difference for a smooth(not necessarily analytic) function
    defined on a smooth Jordan arc (or a smooth Jordan curve) in the complex plane.
    As an immediate application,
    an error estimate for complex polynomial interpolation on 
    a Jordan arc (or a Jordan curve) 
    is given, 
    which extends the well-known error estimate for polynomial interpolation on the unit interval.
    Moreover, this upper bound is independent of the parametrization of the curve.
\end{abstract}

\section{Introduction} 
\label{sec:intro}
Suppose $f$ is a smooth function on $[0,1]$.
The problem of interpolating $f$ at $n+1$ distinct nodes $z_1,\dots,z_{n+1}\in [0,1]$ 
using a polynomial of degree $n$ has a satisfying answer,
for which we have the following well-known error estimate (cf. \cite[p.314]{ascher})
\begin{equation}
    \label{eq:error0}
    \left\lvert f(z)-p_n(z)\right\rvert \leq \dfrac{\sup_{0<\xi<1}\left\lvert f^{(n+1)}(\xi)\right\rvert}{(n+1)!} 
    \prod_{k=1}^{n+1} \left\lvert z-z_k\right\rvert.
\end{equation}
However, if we consider polynomial interpolation with a complex variable
for a smooth non-analytic function $f$ defined on a smooth Jordan arc or a smooth Jordan curve in the complex plane,
no result that resembles \eqref{eq:error0} is available.

Even though efforts have been made over the years in complex polynomial interpolation,
for example,
with monographs on this topic by J.L.Walsh (\cite{walsh}), D.Gaier (\cite{gaier}), etc.,
and numerous papers 
such as \cite{hermite1878}, \cite{fejer},
\cite{curtisslagrange}, \cite{curtisscondition}, \cite{curtisscircle}, \cite{curtissjordan},
\cite{reichelconformal}, \cite{reichelnewton}, etc.,
the number of literatures investigating 
a possible extension of \eqref{eq:error0} to the complex plane
is quite limited.
Moreover, most of the results in existing literatures on complex polynomial interpolation
require $f$ be analytic in certain domain of interest
(cf. \cite{hermite1878}, \cite{gaier}, \cite{curtisslagrange}, \cite{reichelconformal}, \cite{reichelnewton}),
or the curve be analytic (cf.\cite{curtisslagrange}),
and all of them focus on interpolation on a boundary curve (instead of a piece of arc)
due to various needs,
for example,
in conformal mapping (cf.\cite{reichelconformal}) and
in solving Dirichlet problems (cf.\cite{curtissdirichlet}), etc.

In terms of extending \eqref{eq:error0} to the complex plane,
we note that since \eqref{eq:error0} can be deduced by estimating divided difference
for $f$,
it then boils down to estimate the divided difference for the general case 
where $f$ is not necessarily analytic and 
the Jordan arc (or Jordan curve) is not analytic, either.
The only paper we can find that deals with this issue is \cite{curtissjordan}, 
in which the author showed the uniform boundedness of 
the divided difference for $f$ on a Jordan curve 
in the complex plane. 
Though no explicit bound was given in \cite{curtissjordan},
by following the proof in \cite{curtissjordan},
we are able to find an upper bound that scales like $C_f a^{n^2}/n!$,
where $C_f$ is a constant related to certain derivative of $f$ 
and $a > 1$ is a constant depending on the parametrization of the curve.
Obviously, this bound is too large to use in practice.
The appearance of $a^{n^2}$ in the estimate is due to the \emph{indirect} approach 
used in \cite{curtissjordan} to bound the divided difference, 
where the problem for a general Jordan curve was transformed into the problem for a unit circle
by a change of variable (cf.\cite[Lemma 3.1, Lemma 3.3]{curtissjordan}), leading to 
an estimate highly sensitive to the parametrization of the curve.
Therefore, it is the aim of this paper to employ a \emph{direct} approach to provide an upper bound 
independent of the parametrization of the curve and hopefully
in a similar form as in the real case. 
As a straightforward application,
an error estimate for polynomial interpolation with a complex variable 
on a Jordan arc or a Jordan curve in the complex plane 
will be obtained, 
which can be viewed as an extension of \eqref{eq:error0}.

Since estimates for divided differences on Jordan curves as in \cite{curtissjordan} 
can all be derived by transforming the problem into 
estimating divided differences on Jordan arcs,
it suffices for us to focus only on divided differences on Jordan arcs,
and the case for Jordan curves can be immediately obtained
as a byproduct.

We define divided difference as follows.
For $n+1$ distinct points 
$z_1, \dots, z_{n+1}$
on the complex plane, and a function $f$ defined on a set containing those points,
the divided difference for $f$ of order $k(k=0,\dots, n)$ with respect to those points
is defined recursively by 
\begin{equation}
\label{eq:dvdef}
\begin{aligned}
    d_0 &= d_0(f|z_1) = f(z_1) \\
    d_k &= d_k(f|z_1,\dots,z_{k+1}) \\
        &= \dfrac{d_{k-1}(f|z_1,z_2,\dots,z_{k})-
d_{k-1}(f|z_{k+1},z_2,\dots,z_{k})}{z_1-z_{k+1}}, \quad 1\leq k \leq n.
\end{aligned}
\end{equation}

It will be shown later how to define divided difference properly at those points when 
$z_i=z_j$ for some $i\neq j$.

There are several definitions or representations for divided difference.

Recall the Newton divided difference interpolation formula (cf.\cite{norlund},
\cite{ascher})
\begin{multline}
    \label{eq:newton}
    p_n(z) = d_0(f|z_1) + d_1(f|z_1,z_2)(z-z_1) + d_2(f|z_1,z_2,z_3)(z-z_1)(z-z_2) \\
+\dots + d_n(f|z_1,\dots,z_{n+1})(z-z_1)(z-z_2)\dots(z-z_n),
\end{multline}
or in the form (cf.\cite{norlund})
\begin{equation}
    \label{eq:newton1}
    f(z) = p_n(z) + d_{n+1}(f|z_1,\dots,z_{n+1},z)(z-z_1)(z-z_2)\dots(z-z_{n+1}),
\end{equation}
with $p_n(z)$ given in \eqref{eq:newton},
and the Lagrange interpolation formula (cf.\cite{norlund},
\cite{ascher})
\begin{equation}
    \label{eq:lagrange}
    p_n(z) = \sum_{k=1}^{n+1} f(z_k)\dfrac{w_k(z)}{w_k(z_k)},
\end{equation}
where 
\[
    w_k(z) := \prod_{i\neq k} (z-z_i).
\]
Since the interpolating polynomial of degree at most $n$
is unique, by comparing the coefficient of $z^n$ in \eqref{eq:newton} and \eqref{eq:lagrange},
we see that 
\begin{equation}
\label{eq:dvlag}
    d_n(f|z_1,\dots,z_{n+1}) = \sum_{k=1}^{n+1} \dfrac{f(z_k)}{w_k(z_k)}.
\end{equation}
In addition to \eqref{eq:dvdef},
Eq.\eqref{eq:dvlag} above provides another definition for divided difference,
from which it can be seen that the divided difference
$d_n(f|z_1,\dots,z_{n+1})$ is invariant under 
any permutation of interpolation nodes $z_1,\dots,z_{n+1}$.

Besides, suppose either $f$ is analytic in a neighborhood of the convex hull of 
$S=\{z_1,\dots,z_{n+1}\} \subset \mathbb{C}$, 
or $f$ is smooth in an open interval containing $S$
if $S\subset \mathbb{R}$,
an integral representation known as Hermite formula 
(or Genocchi-Hermite formula, cf.\cite{deboordvdiff})
can be derived (cf.\cite{hermite1878}, \cite{norlund}):
\begin{multline}
\label{eq:dvintegral}
    d_n(f|z_1,\dots,z_{n+1}) = \int_0^1 dt_1 \int_0^{t_1} dt_2\dots \int_0^{t_{n-1}} 
    f^{(n)} \Big( (1-t_1)z_1+(t_1-t_2)z_2+\dots\\
    +(t_{n-1}-t_n)z_n+t_n z_{n+1}\Big) dt_n  .
\end{multline}
From this representation, it is straightforward to obtain an upper bound of $|d_n|$,
namely,
\begin{equation}
\label{eq:bound0}
    |d_n(f|z_1,\dots,z_{n+1})| \leq \dfrac{\norm{f^{(n)}}_{\text{sup}}}{n!},
\end{equation}
where the \emph{sup} of $|f^{(n)}|$ is taken inside the convex hull of $S$.

However, if the assumptions above on $f$ and $S$ do not hold, 
namely, if $S\not\subset \mathbb{R}$ and $f$ is not analytic, 
no result on the upper bound of $|d_n|$
can be found other than the one in \cite{curtissjordan}.
As mentioned before, 
the result in \cite{curtissjordan} is too pessimistic and is dependent on the parametrization
of the curve. 
We aim to find an upper bound for $|d_n|$ that is independent of the parametrization of the curve,
and is in a similar form as the one in \eqref{eq:bound0}.
Namely,
we are looking for an upper bound 
${C_f \alpha(n)}/{n!}$, where $C_f$ is a positive constant depending on certain derivatives 
(in the sense of \eqref{eq:deriv} described later) of $f$ and 
$\alpha(n)$ is a positive function of $n$ that grows at most exponentially in $n$,
i.e., $\alpha(n) \leq a b^n$ for some constant $a, b > 0$
(hence $\lim_{n\to\infty} {\alpha(n)}/{n!}=0$),
and $\alpha(n)$ is independent of the parametrization of the curve.

The rest of this paper is organized as follows.
In Section \ref{sec:property}, we present some technical tools developed in \cite{curtissjordan} 
that will be used in our proof.
In Section \ref{sec:bound}, we derive an upper bound for modulus of divided difference
of a smooth function defined on a Jordan arc (or a Jordan curve) in the complex plane,
and thus establish an explicit error estimate for complex polynomial interpolation on 
a Jordan arc (or a Jordan curve).

\section{Properties of divided difference as a function of one variable} 
\label{sec:property}
By a \emph{Jordan arc}, we mean the image of the closed unit interval $[0,1]$
under a homeomorphism into the complex plane,
while a \emph{Jordan curve} on the complex plane is the homeomorphic image of the unit circle.
We say a Jordan arc (or a Jordan curve) is \emph{admissible} 
if it is the image of a parametrization $\phi$ where
$\phi\in C^1([0,1])$ and $\phi'(t)\neq 0$ for all $t\in [0,1]$,
where at endpoints $0$ and $1$,
$\phi'(0)$ and $\phi'(1)$ are interpreted as one-sided limits 
(additionally, $\phi(0)=\phi(1), \phi'(0)=\phi'(1)$ for a Jordan curve).

As was developed in \cite{curtissjordan},
first we need to define derivatives of a function on the arc.

For a function $f$ defined on the Jordan arc $\gamma$,
the (first order) derivative of $f$ at $z_1\in \gamma$ is given by 
\begin{equation}
    \label{eq:deriv}
    f'(z_1) = \lim_{z\to z_1} \dfrac{f(z)-f(z_1)}{z-z_1}, \; z\in\gamma,
\end{equation}
as long as the limit exists.
Inductively, higher order derivatives can be defined.
We shall use $f^{(k)}$ to denote the $k^{\text{th}}$ order derivative of $f$.
As an example,
we assume that $\gamma$ is an admissible Jordan arc parametrized by $\phi:[0,1]\to \mathbb{C}$
with $\phi'(t)\neq 0$, $\forall t\in [0,1]$,
and that $f\circ\phi\in C^1([0,1]).$ 
The definition in \eqref{eq:deriv} then leads to 
\begin{equation}
    \label{eq:smoothderiv}
    f'(z_1)  = (f\circ\phi)'(t_1) / \phi'(t_1),
\end{equation}
where $\phi(t_1)=z_1.$
Moreover, 
the integral of $f$ on $\gamma$ from $z_2=\phi(t_2)$ to $z_1=\phi(t_1)$ is given by 
\begin{equation}
\label{eq:inte}    
    \int_{z_2}^{z_1} f(z) dz = \int_{t_2}^{t_1} f\circ\phi(t)\, \phi'(t) dt,
\end{equation}
which is independent of the parametrization of $\gamma$ (cf.\cite[p.21]{steincomplex}).
If $f\circ\phi (t)$ is absolutely continuous on $[0,1]$,
it can be easily seen that the derivatives in Eq.\eqref{eq:smoothderiv} 
exist almost everywhere in $t_1\in[0,1]$, 
and we have below the fundamental theorem of calculus for $f$ defined on $\gamma$
\begin{equation*}
    \int_{z_2}^{z_1} f'(z) dz = \int_{t_2}^{t_1} (f\circ\phi)'(t) dt = 
    f\circ\phi(t_1)-f\circ\phi(t_2) = f(z_1)-f(z_2).
\end{equation*}

Having established the calculus, we then present some properties of divided difference
as a function of one variable, all of which can be immediately obtained from results 
in \cite{curtissjordan}.

We follow the notation used in \cite{curtissjordan}, where 
the $k^{\text{th}}$ order partial derivative of the divided difference
$d_n(f|z_1, \dots, z_{n+1})$
with respect to $z_1$ is denoted by $d_n^k$, namely,
\[
d_n^k = d_n^k(f|z_1,\dots,z_{n+1}) = \dfrac{\partial^k d_n(f|z_1,\dots,z_{n+1})}{\partial z_1^k}.
\]

The following lemma is same to Lemma $3.2$ in \cite{curtissjordan}
by simply replacing "Jordan curve" in \cite{curtissjordan} with "Jordan arc" in our setting,
and the proof is almost the same.
\begin{lemma}
\label{lm:lemma1}
    Let $f$ be a function defined on an admissible Jordan arc $\gamma$ with parametrization $\phi$
    such that $f^{(n-1)}$ exists everywhere on $\gamma$ 
    and $f^{(n-1)}\circ \phi (t)$ is absolutely continuous on $[0,1]$. 
    Then we have 
    \begin{equation}
        \label{eq:lemma1}
        d_1^{k-1}(f|z_1,z_2) = \dfrac{\int_{z_2}^{z_1} (z-z_2)^{k-1} f^{(k)}(z)dz}{(z_1-z_2)^k},
        \; k=1,2,\dots,n,
    \end{equation}
    where $z_1,z_2\in\gamma, \text{ and } z_1\neq z_2.$
\end{lemma}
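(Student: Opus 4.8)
The plan is to prove the identity \eqref{eq:lemma1} by induction on $k$, reducing everything to the fundamental theorem of calculus for functions on $\gamma$ that was just established in the excerpt. First I would record the base case $k=1$: here $d_1^0(f|z_1,z_2) = d_1(f|z_1,z_2) = \bigl(f(z_1)-f(z_2)\bigr)/(z_1-z_2)$ by the definition \eqref{eq:dvdef}, and by the fundamental theorem of calculus on $\gamma$ the numerator equals $\int_{z_2}^{z_1} f'(z)\,dz$, which is exactly the right-hand side of \eqref{eq:lemma1} with $k=1$ since $(z-z_2)^0 = 1$ and $(z_1-z_2)^1 = z_1 - z_2$. (The hypothesis that $f^{(n-1)}\circ\phi$ is absolutely continuous guarantees, via \eqref{eq:smoothderiv} and \eqref{eq:inte}, that each $f^{(j)}$ for $j \le n$ is an integrable function on $\gamma$ to which this version of the fundamental theorem applies; I would note this once at the outset.)

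For the inductive step, assume the formula holds for some $k$ with $1 \le k \le n-1$, and differentiate both sides of \eqref{eq:lemma1} with respect to $z_1$ to pass from $d_1^{k-1}$ to $d_1^{k}$. On the left this produces $d_1^{k}(f|z_1,z_2)$ by definition of $d_n^k$. On the right I would differentiate the quotient $\bigl(\int_{z_2}^{z_1}(z-z_2)^{k-1}f^{(k)}(z)\,dz\bigr)/(z_1-z_2)^k$ using the quotient rule, where differentiating the integral with respect to its upper limit $z_1$ (along $\gamma$) gives the integrand evaluated at $z_1$, namely $(z_1-z_2)^{k-1}f^{(k)}(z_1)$. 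After combining terms over the common denominator $(z_1-z_2)^{k+1}$, the numerator becomes
\[
(z_1-z_2)\,(z_1-z_2)^{k-1}f^{(k)}(z_1) \;-\; k\int_{z_2}^{z_1}(z-z_2)^{k-1}f^{(k)}(z)\,dz .
\]
I would then rewrite $(z_1-z_2)^{k}f^{(k)}(z_1)$ as $\int_{z_2}^{z_1}\frac{d}{dz}\bigl[(z-z_2)^{k}f^{(k)}(z)\bigr]\,dz$ via the fundamental theorem on $\gamma$, expand that derivative by the product rule as $k(z-z_2)^{k-1}f^{(k)}(z) + (z-z_2)^{k}f^{(k+1)}(z)$, and observe that the term $k\int(z-z_2)^{k-1}f^{(k)}\,dz$ cancels exactly, leaving $\int_{z_2}^{z_1}(z-z_2)^{k}f^{(k+1)}(z)\,dz$ over $(z_1-z_2)^{k+1}$ — which is precisely \eqref{eq:lemma1} with $k$ replaced by $k+1$. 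This closes the induction and yields the claim for all $k = 1, \dots, n$.

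The main obstacle I anticipate is justifying the differentiation in $z_1$ along the arc rigorously: one must check that $z_1 \mapsto \int_{z_2}^{z_1}(z-z_2)^{k-1}f^{(k)}(z)\,dz$ is differentiable with respect to $z_1 \in \gamma$ in the sense of \eqref{eq:deriv}, with derivative the integrand at $z_1$. This is where the absolute-continuity hypothesis does the real work: transporting the integral to the parameter interval via \eqref{eq:inte} turns it into an ordinary Lebesgue integral of an $L^1$ function on $[0,1]$, whose indefinite integral is absolutely continuous and hence differentiable almost everywhere with the expected derivative; dividing by $\phi'(t_1) \ne 0$ as in \eqref{eq:smoothderiv} then converts this back to differentiation along $\gamma$. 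A second, minor point to handle is that the identities are asserted pointwise while some steps hold only almost everywhere in the parameter; since both sides are continuous in $z_1$ away from $z_1 = z_2$ (the integral terms are, and $f^{(k)}$ is continuous by hypothesis for $k \le n-1$, with $f^{(n)}$ appearing only inside an integral for $k=n$), equality a.e. upgrades to equality everywhere on $\gamma \setminus \{z_2\}$. I would mention that the rest is a routine computation and refer to \cite{curtissjordan} for the details that are genuinely identical to the Jordan-curve case.
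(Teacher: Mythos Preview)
Your proposal is correct: the induction on $k$, with the base case coming from the fundamental theorem of calculus on $\gamma$ and the inductive step carried out by differentiating in $z_1$ and then integrating by parts (your rewriting of $(z_1-z_2)^k f^{(k)}(z_1)$ as $\int_{z_2}^{z_1}\frac{d}{dz}\bigl[(z-z_2)^k f^{(k)}(z)\bigr]\,dz$ is exactly integration by parts in disguise), is the standard route to this identity. You have also correctly located the only genuine technical issue, namely justifying differentiation of the arc integral in $z_1$ via the parametrization and the absolute-continuity hypothesis.

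As for comparison with the paper: the paper does not supply its own proof of this lemma. It simply states that the result is Lemma~3.2 of Curtiss \cite{curtissjordan} with ``Jordan curve'' replaced by ``Jordan arc'' and that ``the proof is almost the same.'' Your argument is precisely the kind of proof one expects Curtiss's Lemma~3.2 to contain, so in that sense you are filling in what the paper omits rather than diverging from it. One small point worth tightening in a final write-up: in the inductive step at level $k$ you use that $f^{(k)}\circ\phi$ is absolutely continuous (so that the product-rule expansion inside the integral is legitimate); for $k\le n-1$ this should be recorded explicitly as a consequence of the hypotheses, and for the top step $k=n-1\to n$ you correctly use only that $f^{(n)}$ exists a.e.\ and is integrable.
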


In order to derive the smoothness of divided difference for a smooth function,
we need the following boundedness result,
whose proof can be found in \cite[Lemma 3.3]{curtissjordan}.
\begin{lemma}
    \label{lm:lemma2}
    Let $f$ be a function defined on an admissible Jordan arc $\gamma$ 
    such that $|f\circ\phi|$ is uniformly bounded on $[0,1]$ except on a set of measure zero.
    We define 
    \[
        J_k(z_1,z_2) = \int_{z_2}^{z_1} (z-z_2)^k f(z)dz,\quad z_1,z_2\in\gamma,
    \]
    to be an integration on $\gamma$.
    Then for each nonnegative integer $k$, there exists a constant $M_k$,
    depending only on $g \text{ and } \gamma$, such that 
    \[
        \left\lvert \dfrac{J_k(z_1,z_2)}{(z_1-z_2)^{k+1}}\right\rvert \leq M_k
    \]
    for all $z_1,z_2\in\gamma$ with $z_1\neq z_2$.
\end{lemma}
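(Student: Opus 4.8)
The plan is to reduce everything to the parametrization and exploit the non-vanishing of $\phi'$ to control the denominator $|z_1-z_2|$ from below by $|t_1-t_2|$ (up to a constant), while controlling the numerator from above by a power of $|t_1-t_2|$. First I would write $z_i=\phi(t_i)$ and use \eqref{eq:inte} to express
\[
    J_k(z_1,z_2) = \int_{t_2}^{t_1} (\phi(t)-\phi(t_2))^k\, f(\phi(t))\, \phi'(t)\, dt .
\]
Since $\gamma$ is admissible, $\phi\in C^1([0,1])$ and $\phi'\neq 0$ on the compact set $[0,1]$, so there exist constants $0<m\le L<\infty$ with $m\le |\phi'(t)|\le L$ for all $t$; together with the mean value inequality applied to $\mathrm{Re}\,\phi$ and $\mathrm{Im}\,\phi$ (or, more cleanly, the fundamental theorem of calculus $\phi(s)-\phi(t)=\int_t^s\phi'(u)\,du$), this gives the two-sided comparison
\[
    c\,|s-t| \le |\phi(s)-\phi(t)| \le L\,|s-t|, \qquad s,t\in[0,1],
\]
for a constant $c>0$. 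The lower bound is the crucial point: it says $\phi$ is bi-Lipschitz, which is exactly what prevents the denominator from collapsing faster than the numerator.

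Granting the bi-Lipschitz estimate, the rest is a direct computation. Let $B$ be the essential supremum of $|f\circ\phi|$ on $[0,1]$. Then
\[
    |J_k(z_1,z_2)| \le \int_{\min(t_1,t_2)}^{\max(t_1,t_2)} |\phi(t)-\phi(t_2)|^k\, |f(\phi(t))|\, |\phi'(t)|\, dt
    \le L^k\, B\, L \int_{\min(t_1,t_2)}^{\max(t_1,t_2)} |t-t_2|^k\, dt
    = \frac{L^{k+1} B}{k+1}\, |t_1-t_2|^{k+1},
\]
while $|z_1-z_2|^{k+1} = |\phi(t_1)-\phi(t_2)|^{k+1} \ge c^{k+1}\,|t_1-t_2|^{k+1}$. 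Dividing, the factors $|t_1-t_2|^{k+1}$ cancel and we obtain
\[
    \left\lvert \frac{J_k(z_1,z_2)}{(z_1-z_2)^{k+1}} \right\rvert \le \frac{B}{k+1}\left(\frac{L}{c}\right)^{k+1} =: M_k,
\]
which depends only on $k$, on $B$ (i.e.\ on $f$), and on the geometry of $\gamma$ through $L$ and $c$. Note that although $\phi'$ can vanish nowhere, so $L/c$ is finite, this ratio does encode the curve; the statement that $M_k$ depends "only on $g$ and $\gamma$" is exactly this.

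The main obstacle — really the only nontrivial ingredient — is establishing the lower bound $|\phi(s)-\phi(t)|\ge c|s-t|$ uniformly. A naive pointwise argument only gives it locally, from $|\phi'(t)|\ge m$ and continuity of $\phi'$; globally one must rule out the arc coming close to itself without the parameters being close, but this is precisely ruled out because $\phi$ is a homeomorphism onto a compact arc, so the continuous positive function $(s,t)\mapsto |\phi(s)-\phi(t)|/|s-t|$ on $\{(s,t): s\ne t\}$ extends continuously to the diagonal (with value $|\phi'(t)|\ge m$ there) and hence attains a positive minimum $c$ on the compact square $[0,1]^2$. Once this compactness argument is in place, everything else is the routine estimate above. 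Since the lemma is quoted from \cite[Lemma 3.3]{curtissjordan}, I would present this as a brief self-contained recollection of that argument rather than a fresh proof.
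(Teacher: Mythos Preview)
Your argument is correct: the bi-Lipschitz lower bound $|\phi(s)-\phi(t)|\ge c|s-t|$ via the compactness/continuous-extension trick is exactly the right ingredient, and the subsequent estimate is routine. The paper itself gives no proof of this lemma---it simply defers to \cite[Lemma~3.3]{curtissjordan}---so there is no in-paper argument to compare against; your self-contained recollection is entirely appropriate and is the standard way to establish the result.
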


Next we show that if proper value is defined at $z_1=z_2$,
then $d_1(f|z_1,z_2)$, as a function of $z_1$, inherits the smoothness of $f'$,
which resolves our concern in the definition  of divided difference given in \eqref{eq:dvdef}
when $z_i=z_j$ for some $i\neq j$, assuming $f$ is smooth enough.
This property was mentioned in \cite{curtissjordan} where the proof was omitted.
For completeness, we give a rigorous proof below. 
\begin{lemma}
    \label{lm:lemma3}
    Let $f,\gamma,\phi$ be given as in Lemma \ref{lm:lemma1} with $n\geq 2$,
    and we further assume that $f^{(n-1)}\circ \phi$ is Lipschitz continuous.
    Then for any fixed $z_2=\phi(t_2)\in\gamma$,
    $|d_1^{n-1}(f|z_1,z_2)|$ is uniformly bounded in $z_1\in \gamma\backslash \{z_2\},$
    and for each integer $k$ with $0\leq k\leq n-2$, 
    we can assign a proper value at $t_1=t_2$ (i.e., $z_1=\phi(t_1)=z_2$) such that 
    $d_1^k(f|\phi(t_1),z_2)$ is absolutely continuous as a function of $t_1\in [0,1]$.
\end{lemma}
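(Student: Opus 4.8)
The strategy is to exploit the integral representation in Lemma~\ref{lm:lemma1}, namely
\[
    d_1^{k-1}(f|z_1,z_2) = \dfrac{\int_{z_2}^{z_1} (z-z_2)^{k-1} f^{(k)}(z)\,dz}{(z_1-z_2)^k},
    \qquad k=1,\dots,n,
\]
together with the boundedness estimate of Lemma~\ref{lm:lemma2}. First I would dispose of the uniform boundedness of $|d_1^{n-1}(f|z_1,z_2)|$: taking $k=n$ in Lemma~\ref{lm:lemma1} expresses $d_1^{n-1}$ as $J_{n-1}(z_1,z_2)/(z_1-z_2)^n$ with integrand $g=f^{(n)}$, and since $f^{(n-1)}\circ\phi$ is Lipschitz, $(f^{(n-1)}\circ\phi)'$ — hence $f^{(n)}\circ\phi$ via \eqref{eq:smoothderiv} — exists a.e.\ and is essentially bounded on $[0,1]$. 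Lemma~\ref{lm:lemma2} with this $g$ then gives the bound $M_{n-1}$ immediately.

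For the absolute-continuity claims I would argue by a downward induction on $k$, from $k=n-2$ toward $k=0$, showing at each stage that $d_1^{k}(f|\phi(t_1),z_2)$ (with the value at $t_1=t_2$ to be specified) is absolutely continuous in $t_1$ and that its a.e.\ derivative with respect to $t_1$ equals $\phi'(t_1)\,d_1^{k+1}(f|\phi(t_1),z_2)$. The base case $k=n-2$: from Lemma~\ref{lm:lemma1} with $k=n-1$, write
\[
    d_1^{n-2}(f|\phi(t_1),z_2) = \dfrac{\int_{t_2}^{t_1}(\phi(s)-z_2)^{n-2} f^{(n-1)}(\phi(s))\,\phi'(s)\,ds}{(\phi(t_1)-z_2)^{n-1}},
\]
using \eqref{eq:inte} to rewrite the arc integral. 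The numerator is absolutely continuous in $t_1$ (an indefinite integral of an $L^1$ function), and the denominator is $C^1$ and nonvanishing away from $t_1=t_2$; the quotient is therefore absolutely continuous on any closed subinterval of $[0,1]$ avoiding $t_2$. The point $t_1=t_2$ is where care is needed: I would assign the value $d_1^{n-2}(f|z_2,z_2):=f^{(n-1)}(z_2)/(n-1)$ (the natural limiting value, obtainable by L'Hôpital or by a Taylor-type expansion of the numerator), and then show absolute continuity across $t_2$ by verifying that the difference quotient is controlled — here the already-established uniform bound on $d_1^{n-1}$ furnishes a uniform Lipschitz bound for $d_1^{n-2}(f|\phi(\cdot),z_2)$ near $t_2$ via the fundamental theorem of calculus on $\gamma$. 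The inductive step is formally identical: once $d_1^{k+1}(f|\phi(\cdot),z_2)$ is known to be absolutely continuous, hence bounded on $[0,1]$, Lemma~\ref{lm:lemma2} (applied with the bounded integrand $g=d_1^{k+1}$ rather than a derivative of $f$, after re-expressing $d_1^{k}$ as $J_{k}(z_1,z_2)/(z_1-z_2)^{k+1}$ of that integrand) gives the uniform bound needed to push absolute continuity through the point $t_2$, and the indefinite-integral structure gives it on the complement.

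The main obstacle I anticipate is the behavior at the coincidence point $t_1=t_2$: off that point everything is an elementary quotient of smooth/absolutely-continuous functions, but absolute continuity is a global property on $[0,1]$, so one must genuinely show no pathology develops as $z_1\to z_2$. The clean way around this is to never differentiate through the singularity directly but instead to (i) pin down the removable value at $t_1=t_2$ as the appropriate limit, and (ii) use the uniform boundedness of the \emph{next} derivative $d_1^{k+1}$ (established one step earlier in the induction, with the very last step supplied by the Lipschitz hypothesis on $f^{(n-1)}\circ\phi$) to get a uniform Lipschitz estimate for $d_1^{k}(f|\phi(\cdot),z_2)$ in a neighborhood of $t_2$, which upgrades continuity-at-$t_2$-plus-absolute-continuity-away-from-$t_2$ to absolute continuity on all of $[0,1]$. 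A secondary technical point is checking that the relation $\partial_{z_1} d_1^{k} = d_1^{k+1}$ — i.e., that partial differentiation in $z_1$ commutes with the recursive construction — holds in the almost-everywhere sense compatible with the Lipschitz/absolute-continuity regularity assumed; this follows by differentiating the integral representation of Lemma~\ref{lm:lemma1} under the integral sign, which is justified because the integrand depends on $z_1$ only through the smooth factor $(\phi(t_1)-z_2)^{k}$ and the limit of integration.
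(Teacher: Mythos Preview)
Your plan is essentially the paper's argument, with one unnecessary layer and one mis-step you should drop.

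The paper does not induct downward. It observes that for \emph{every} $k\in\{1,\dots,n-1\}$ the representation in Lemma~\ref{lm:lemma1} has integrand $f^{(k+1)}$, which is continuous (for $k\le n-2$) or essentially bounded (for $k=n-1$, by the Lipschitz hypothesis on $f^{(n-1)}\circ\phi$); so Lemma~\ref{lm:lemma2} gives uniform boundedness of all $|d_1^k|$ at once. Then for each $k$ separately: the bounded derivative $d_1^k$ makes $d_1^{k-1}(f|\phi(\cdot),z_2)$ of bounded variation on every closed subinterval of $[0,1]\setminus\{t_2\}$, so one-sided limits at $t_2$ exist; L'H\^opital applied to the integral formula shows both one-sided limits equal $f^{(k)}(z_2)/k$, and assigning this value yields absolute continuity on all of $[0,1]$. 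This is exactly your mechanism (bounded next derivative $\Rightarrow$ control across $t_2$; removable value via L'H\^opital), just without the induction.

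The step to excise is your proposed re-expression of $d_1^{k}$ as $J_{k}(z_1,z_2)/(z_1-z_2)^{k+1}$ with integrand $g=d_1^{k+1}$: that identity is false. Lemma~\ref{lm:lemma1} always has integrand $f^{(k+1)}$, and
\[
\frac{\int_{z_2}^{z_1}(z-z_2)^{k}\, d_1^{k+1}(f|z,z_2)\,dz}{(z_1-z_2)^{k+1}}
\]
does not equal $d_1^{k}$ (check $k=0$). Fortunately you do not need it: once $d_1^{k+1}$ is bounded --- which follows directly from Lemmas~\ref{lm:lemma1} and~\ref{lm:lemma2} with integrand $f^{(k+2)}$, or from your inductive hypothesis --- the relation $\partial_{z_1}d_1^{k}=d_1^{k+1}$ together with the fundamental theorem of calculus on $\gamma$ already supplies the Lipschitz control at $t_2$ that you correctly identified as the key point. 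Delete the spurious Lemma~\ref{lm:lemma2} invocation in the inductive step and your argument matches the paper's.
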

\begin{proof}
    For the following proof, we assume $k\in \{1,\dots,n-1\}$.

    Since $f^{(n-1)}\circ\phi$ is Lipschitz continuous,
    its derivative exists almost everywhere and is uniformly bounded.
    Lemma \ref{lm:lemma1} and Lemma \ref{lm:lemma2} then imply that 
    $|d_1^{k}(f|z_1,z_2)|$ is uniformly bounded in $z_1$ on $\gamma\backslash \{z_2\}.$
    In particular, $|d_1^{n-1}(f|z_1,z_2)|$ is uniformly bounded.

    With the uniform boundedness of $|d_1^{n-1}|$,
    we are able to define $d_1^{k-1}(f|z_1,z_2)$ at $z_1=z_2$
    such that $d_1^{k-1}(f|\phi(t_1),z_2)$ is absolutely continuous in $t_1\in [0,1]$.
    To do this, we first observe that 
    the total variation of $d_1^{k-1}(f|\phi(t_1),z_2)$ is uniformly bounded 
    on any subinterval of $[0,1]\backslash \{t_2\}$. 
    This implies that (cf.\cite[p.371, Ex.6]{titchmarsh1939book}),
    as $t_1$ approaches $t_2$ from either side,
    the limit of $d_1^{k-1}(f|\phi(t_1),z_2)$ exists,
    and assuming two limits coincide,
    if we define $d_1^{k-1}(f|\phi(t_2),z_2)$ to be equal to the limit,
    $d_1^{k-1}(f|\phi(t_1),z_2)$ is absolutely continuous as a function of $t_1$ on $[0,1]$.
    Thus if $t_2$ is an endpoint in $[0,1]$, 
    we can assign to $d_1^{k-1}(f|z_2,z_2)$ the unique limit as $t_1\to t_2$.
    If $t_2$ is an interior point in $[0,1]$,
    we shall prove that the two limits coincide as $t_1$ approaches $t_2$ from either side.
    In fact, from \eqref{eq:lemma1} and 
    the continuity of $f^{(k)}\circ\phi(t)$ in the assumption, 
    we deduce by using L'Hospital's rule that 
    \begin{equation}
    \label{eq:lopi}
    \begin{aligned}
        \lim_{t_1\to t_2}  d_1^{k-1}(f|\phi(t_1),\phi(t_2)) 
        &=
        \lim_{t_1\to t_2}
\dfrac{\int_{t_2}^{t_1} (\phi(t)-\phi(t_2))^{k-1} f^{(k)}\circ\phi(t)\phi'(t)dt}{(\phi(t_1)-\phi(t_2))^{k}} \\
        &= \lim_{t_1\to t_2} 
\dfrac{(\phi(t_1)-\phi(t_2))^{k-1} f^{(k)}\circ\phi(t_1)\phi'(t_1)}{k(\phi(t_1)-\phi(t_2))^{k-1}\phi'(t_1)} \\
    &= \dfrac{f^{(k)}(z_2)}{k}.
    \end{aligned}
    \end{equation}
    Hence by setting $d_1^{k-1}(f|z_2,z_2)$ to be equal to the limit above,
    we conclude that 
    $d_1^{k-1}(f|\phi(t_1),z_2)$ is absolutely continuous in $t_1\in [0,1]$. 
\end{proof}

The proof above implies that, 
with $d_1^{k-1}(f|z_2,z_2) (k=1,\dots,n-1)$ properly defined,
$d_1^{k-1}(f|\phi(t_1),z_2) $ will be absolutely continuous
in $t_1\in [0,1]$,
and $|d_1^{n-1}(f|z_1,z_2)|$ will be uniformly bounded in $z_1\in\gamma\{z_2\}$,
as long as the following two conditions are all satisfied:
\begin{enumerate}
    \item \label{cond1} 
    $f^{(n-1)}\circ\phi$ is absolutely continuous
    (consequently the representation formula 
    \eqref{eq:lemma1} in Lemma \ref{lm:lemma1} 
    holds for $d_1^{k-1}(f|z_1,z_2)\; (k=1,\dots,n)$);
    \item \label{cond2} 
    $|f^{(n)}\circ\phi(t)|$ is uniformly bounded in $[0,1]$ except on a set of measure zero 
    (hence Lemma \ref{lm:lemma2} can be applied to $f^{(n)}$).
\end{enumerate}

We next show that higher order divided differences can also be made continuous
by recursively verifying the two conditions above.

We set $g(z)=d_1(f|z,z_2).$
Lemma \ref{lm:lemma3} shows that $g^{(n-2)}\circ\phi(t)$ is absolutely continuous in $[0,1]$
and $|g^{(n-1)}(z)|$ is uniformly bounded on $\gamma\backslash \{z_2\}$.
Hence condition \ref{cond1} and condition \ref{cond2} are both satisfied,
and the absolute continuity of 
$d_1^{k-1}(g|\phi(t_1),z_3)$ ($k=1,\dots,n-2$)
in $t_1\in [0,1]$ follows,
with $d_1^{k-1}(g|z_3,z_3)$ properly defined .

Note that 
\[
\begin{aligned}
    d_2^{k-1}(f|z_1,z_2,z_3) &= 
    \frac{\partial^{k-1}}{\partial z_1^{k-1}} d_2(f|z_1,z_2,z_3) \\
    &= \frac{\partial^{k-1}}{\partial z_1^{k-1}} d_1(d_1(f|z,z_2)|z_1,z_3) 
    = d_1^{k-1}(g|z_1,z_3). 
\end{aligned}
\]
Therefore, we have established the absolute continuity of 
$d_2^{k-1}(f|z_1,z_2,z_3)$ ($k=1,\dots,n-2$),
as a function of $t_1=\phi^{-1}(z_1) \in [0,1]$,
as well as the uniform boundedness of $|d_2^{n-2}(f|z_1, z_2, z_3)|$.

Similarly,
we can then set $h(z) = d_2(f|z,z_2,z_3)$ and use Lemma \ref{lm:lemma3}
to deduce the absolute continuity of $d_3^{k-1}$ ($k=1,\dots,n-3$)
and uniform boundedness of $|d_3^{n-3}|$.

Therefore, by iteratively using Lemma \ref{lm:lemma3} 
to verify the two conditions mentioned above,
we arrive at the following theorem.
\begin{theorem}
    Let $f$ be a function defined on an admissible Jordan arc $\gamma$ with parametrization $\phi$
    such that $f^{(n-1)}$ ($n\geq 2$) exists everywhere on $\gamma$ 
    and $f^{(n-1)}\circ \phi (t)$ is Lipschitz continuous on $[0,1]$. 
Then for any integer $k$ with $1\leq k \leq n$,
$|d_k^{n-k}(f|\phi(t_1),
z_2,
\dots,
z_{k+1})|$ is uniformly bounded almost everywhere on $[0,1]$
as a function of $t_1$, and 
for $m=0,1\dots,n-k-1$ ,
$d_k^{m}(f|\phi(t_1),z_2,\dots,z_{k+1})$ 
is absolutely continuous in $t_1$ when proper value is defined at $z_1=\phi(t_1)=z_{k+1}$.
Moreover, the following equation holds as a generalization of Equation \eqref{eq:lemma1}.
\begin{equation}
\label{eq:relation}
    d_k^{m} = 
    \dfrac{\int_{z_{k+1}}^{z_1} (z-z_{k+1})^{m} d_{k-1}^{m+1}(f|z,z_2,\dots,z_k)dz}{(z_1-z_{k+1})^{m+1}},
    \quad  k=1,\dots,n, \;\; m=0,\dots,n-k,
\end{equation}
where $z_1\neq z_{k+1},\; d_n^0 = d_n, \text{ and } d_0^n = f^{(n)}$.
\end{theorem}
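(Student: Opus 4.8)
The plan is to argue by induction on $k$, using at each step the identity
\[
d_k(f|z_1,z_2,\dots,z_{k+1})=d_1(g|z_1,z_{k+1}),\qquad g(z):=d_{k-1}(f|z,z_2,\dots,z_k),
\]
which follows at once from the recursion \eqref{eq:dvdef} (exactly as in the passage preceding the theorem for $k=2$), and which, after differentiating $m$ times in $z_1$, gives $d_k^m(f|z_1,\dots,z_{k+1})=d_1^m(g|z_1,z_{k+1})$ together with the identification $g^{(m)}(z)=d_{k-1}^m(f|z,z_2,\dots,z_k)$ of the arc-derivative \eqref{eq:deriv} of $g$ with the partial derivative in $z_1$. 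The whole proof then reduces to applying Lemma~\ref{lm:lemma1} and the boundedness/absolute-continuity conclusion of Lemma~\ref{lm:lemma3} (more precisely, the consequence of conditions \ref{cond1}--\ref{cond2} recorded after its proof) to the one-variable function $g$ in place of $f$, with the integer $n$ there replaced by $N:=n-k+1$.

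\medskip\noindent\emph{Base case $k=1$.} Here $g=f$ and $N=n$. Since $\gamma$ is admissible, $|\phi'|$ is bounded below by a positive constant on $[0,1]$, so the hypothesis that $f^{(n-1)}\circ\phi$ is Lipschitz yields both that $f^{(n-1)}\circ\phi$ is absolutely continuous (condition \ref{cond1}) and that $|f^{(n)}\circ\phi|=|(f^{(n-1)}\circ\phi)'/\phi'|$ is uniformly bounded a.e.\ (condition \ref{cond2}). Then \eqref{eq:relation} for $k=1$ is just \eqref{eq:lemma1} with $k$ relabelled as $m+1$ (using $d_0^{m+1}=f^{(m+1)}$), while Lemma~\ref{lm:lemma3} and the remark following it give that $|d_1^{n-1}(f|\phi(t_1),z_2)|$ is bounded a.e.\ and that $d_1^m(f|\phi(t_1),z_2)$ is absolutely continuous for $m=0,\dots,n-2$ once the proper value at $\phi(t_1)=z_2$ is assigned as in \eqref{eq:lopi}.

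\medskip\noindent\emph{Inductive step.} Assume the statement holds at order $k-1$, where $2\le k\le n$. Applied to $f$, it says that $d_{k-1}^m(f|\,\cdot\,,z_2,\dots,z_k)=g^{(m)}$ exists everywhere on $\gamma$ and $g^{(m)}\circ\phi$ is absolutely continuous (after assigning the proper value at $z_k$) for $m=0,\dots,n-k$; that $|d_{k-1}^{n-k+1}|=|g^{(N)}|$ is bounded a.e.; and that \eqref{eq:relation} holds at level $k-1$. The $m=n-k$ case of the absolute-continuity conclusion and the boundedness conclusion are precisely conditions \ref{cond1} and \ref{cond2} for $g$ with $N$ in place of $n$. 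Hence Lemma~\ref{lm:lemma1} applied to $g$ reads, for $m=0,\dots,n-k$,
\[
d_1^m(g|z_1,z_{k+1})=\frac{\int_{z_{k+1}}^{z_1}(z-z_{k+1})^m g^{(m+1)}(z)\,dz}{(z_1-z_{k+1})^{m+1}},
\]
which, upon substituting $d_1^m(g|z_1,z_{k+1})=d_k^m(f|z_1,\dots,z_{k+1})$ and $g^{(m+1)}=d_{k-1}^{m+1}(f|\,\cdot\,,z_2,\dots,z_k)$, is exactly \eqref{eq:relation} at level $k$; and the conclusion of Lemma~\ref{lm:lemma3} (via the remark after it), applied to $g$ with parameter $N$, gives that $|d_1^{N-1}(g|z_1,z_{k+1})|=|d_k^{n-k}(f|z_1,\dots,z_{k+1})|$ is bounded a.e.\ and that $d_1^m(g|\phi(t_1),z_{k+1})=d_k^m(f|\phi(t_1),\dots,z_{k+1})$ is absolutely continuous for $m=0,\dots,N-2=n-k-1$ after assigning proper values at $\phi(t_1)=z_{k+1}$. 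This closes the induction.

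\medskip When $k=n$ we have $N=1$, so Lemma~\ref{lm:lemma3} and the remark after it are in their degenerate form: there is no absolute-continuity assertion ($m$ ranges over the empty set), and the boundedness of $|d_n^0|=|d_n(f|z_1,\dots,z_{n+1})|$ follows directly from Lemma~\ref{lm:lemma1} with parameter $1$, which gives $d_n=\int_{z_{n+1}}^{z_1}d_{n-1}^1(f|z,z_2,\dots,z_n)\,dz\big/(z_1-z_{n+1})$ (i.e.\ \eqref{eq:relation} with $k=n$, $m=0$), combined with Lemma~\ref{lm:lemma2} applied to the a.e.-bounded function $g'=d_{n-1}^1(f|\,\cdot\,,z_2,\dots,z_n)$. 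The only real obstacle is bookkeeping: keeping the substitution of $N=n-k+1$ for $n$ consistent across Lemmas~\ref{lm:lemma1}--\ref{lm:lemma3}, and verifying that the inductively chosen proper values make $g^{(m)}\circ\phi$ genuinely absolutely continuous on all of $[0,1]$ (not merely defined pointwise), so that the hypotheses of Lemma~\ref{lm:lemma1} are met at the next level. No analytic input is needed beyond what Lemmas~\ref{lm:lemma1}--\ref{lm:lemma3} already supply.
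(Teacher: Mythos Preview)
Your proposal is correct and follows essentially the same approach as the paper: the paper's ``proof'' is precisely the inductive discussion preceding the theorem, where one sets $g(z)=d_{k-1}(f|z,z_2,\dots,z_k)$, verifies conditions~\ref{cond1}--\ref{cond2} for $g$ from the induction hypothesis, and then applies Lemmas~\ref{lm:lemma1} and~\ref{lm:lemma3} to $g$ with the order shifted down by one. Your write-up is in fact more careful than the paper's, making explicit the substitution $N=n-k+1$, the identification $d_k^m(f|\cdot)=d_1^m(g|\cdot,z_{k+1})$, and the degenerate endpoint case $k=n$ where Lemma~\ref{lm:lemma3} no longer applies and one falls back on Lemma~\ref{lm:lemma2} directly.
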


Equation \eqref{eq:relation}
will be the main tool that we use to derive the desired estimate 
in the next section.


\section{An upper bound for modulus of divided difference on a Jordan arc} 
\label{sec:bound}
In \cite{curtissjordan}, the boundedness of $|d_n|$ was obtained by 
a change of variable to convert the problem on a general Jordan curve to the problem 
on the unit circle.
However, 
this indirect approach makes 
the bound (which can be computed by following the proof in \cite{curtissjordan}) 
too pessimistic 
if the shape of the curve is not close to a circle.
In this section,
as opposed to \cite{curtissjordan}, 
we employ a direct approach to derive an explicit upper bound
that does not depend on the parametrization of $\gamma$
and that resembles the estimate in \eqref{eq:bound0}.

To start with,
we first compute upper bounds for $|d_1^k|\; (k=1,2,\dots,n-1)$.
\begin{lemma}
    \label{lm:d1k}
    Let $f$ be defined on an admissible Jordan arc $\gamma$
    such that $f^{(n+1)}$ exists and is continuous on $\gamma$.
    Then 
    \[
    |d_1^k(f|z_1,z_2)| \leq \dfrac{C_{\gamma,k}}{k+1},\quad \forall z_1\neq z_2\in\gamma,\;\;
        k = 1,2,\dots,n-1,
    \]
    where $C_{\gamma,k}$ is a nonnegative constant only depending on $f,\gamma,k.$
\end{lemma}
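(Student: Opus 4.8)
The plan is to turn $d_1^k$ into an integral by means of Lemma~\ref{lm:lemma1} and then to estimate that integral after reparametrizing $\gamma$ by arc length, the arc-length parametrization being the natural choice since $C_{\gamma,k}$ is required to be intrinsic to $\gamma$. The first point to settle is that Lemma~\ref{lm:lemma1} applies: the hypothesis that $f^{(n+1)}$ exists on $\gamma$ makes all lower derivatives $f^{(j)}$ $(j\le n)$ exist on $\gamma$, and the identity \eqref{eq:smoothderiv} gives $(f^{(j)}\circ\phi)'=(f^{(j+1)}\circ\phi)\,\phi'$, so an induction downward from $j=n$ yields $f^{(j)}\circ\phi\in C^1([0,1])$ for every $j\le n$; in particular $f^{(n-1)}\circ\phi$ is absolutely continuous. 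Applying \eqref{eq:lemma1} with $k$ replaced by $k+1$ (legitimate since $1\le k\le n-1$ forces $2\le k+1\le n$) then gives
\[
    d_1^{k}(f|z_1,z_2)=\dfrac{\int_{z_2}^{z_1}(z-z_2)^{k}f^{(k+1)}(z)\,dz}{(z_1-z_2)^{k+1}},\qquad z_1\neq z_2\in\gamma,
\]
and, since $f^{(k+1)}\circ\phi$ is continuous on the compact set $[0,1]$, the number $F_{k+1}:=\sup_{z\in\gamma}|f^{(k+1)}(z)|$ is finite.

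Next I would extract a chord--arc constant for $\gamma$. Let $\psi:[0,L]\to\mathbb{C}$ be the arc-length parametrization (it exists, is $C^1$, and has $|\psi'|\equiv1$ because $\gamma$ is admissible). Since arc length dominates chord length, $|\psi(s)-\psi(\sigma)|\le|s-\sigma|$ always; I claim there is also a constant $c_\gamma\in(0,1]$, depending only on $\gamma$, with $|\psi(s)-\psi(\sigma)|\ge c_\gamma|s-\sigma|$ for all $s,\sigma\in[0,L]$. Near the diagonal this is a Taylor-type estimate: choosing $\delta>0$ so that $|\psi'(u)-\psi'(v)|\le\tfrac12$ whenever $|u-v|\le\delta$, one gets $|\psi(s)-\psi(\sigma)|\ge|\psi'(\sigma)|\,|s-\sigma|-\big|\int_{\sigma}^{s}(\psi'(u)-\psi'(\sigma))\,du\big|\ge\tfrac12|s-\sigma|$ for $|s-\sigma|\le\delta$. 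Away from the diagonal, $(s,\sigma)\mapsto|\psi(s)-\psi(\sigma)|/|s-\sigma|$ is continuous and, by injectivity of the Jordan arc, strictly positive on the compact set $\{(s,\sigma):|s-\sigma|\ge\delta\}$, hence bounded below by some $c'>0$; one takes $c_\gamma:=\min\{\tfrac12,c'\}$, which is intrinsic to $\gamma$.

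With these in hand the estimate is short. Writing $z_i=\psi(\sigma_i)$ and assuming $\sigma_2\le\sigma_1$ (otherwise integrate in the opposite direction), for $s\in[\sigma_2,\sigma_1]$ we have $|\psi(s)-\psi(\sigma_2)|\le s-\sigma_2\le\sigma_1-\sigma_2$ and $|\psi'(s)|=1$, so by \eqref{eq:inte}
\[
    \Big|\int_{z_2}^{z_1}(z-z_2)^{k}f^{(k+1)}(z)\,dz\Big|\ \le\ F_{k+1}\int_{\sigma_2}^{\sigma_1}(s-\sigma_2)^{k}\,ds\ =\ \frac{F_{k+1}}{k+1}\,(\sigma_1-\sigma_2)^{k+1},
\]
while the chord--arc bound gives $|z_1-z_2|^{k+1}\ge c_\gamma^{\,k+1}(\sigma_1-\sigma_2)^{k+1}$. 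Substituting both into the integral representation above yields
\[
    |d_1^{k}(f|z_1,z_2)|\ \le\ \frac{F_{k+1}}{(k+1)\,c_\gamma^{\,k+1}},
\]
which is exactly the asserted inequality with $C_{\gamma,k}:=\big(\sup_{z\in\gamma}|f^{(k+1)}(z)|\big)\big/c_\gamma^{\,k+1}$, a nonnegative number depending only on $f,\gamma,k$.

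The hard part is the chord--arc constant $c_\gamma$: upgrading admissibility ($\phi\in C^1$, $\phi'\neq0$) to a uniform two-sided comparison between chord and arc length requires combining the local Taylor estimate above with a compactness argument that crucially exploits injectivity of the Jordan arc; the rest is a direct computation (and for a straight segment $c_\gamma=1$, recovering the real-case form). I note that one could instead shortcut the representation step by applying Lemma~\ref{lm:lemma2} to $g=f^{(k+1)}$ to get $|d_1^k|\le M_k$ and then declare $C_{\gamma,k}:=(k+1)M_k$, but that constant is not explicit, whereas the computation above exhibits $C_{\gamma,k}$ in terms of $\sup_\gamma|f^{(k+1)}|$ and the intrinsic geometric constant $c_\gamma$, in keeping with the aims of the paper.
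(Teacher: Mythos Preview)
Your argument is correct: the integral representation from Lemma~\ref{lm:lemma1} applies, the chord--arc constant $c_\gamma$ exists for an admissible Jordan arc by exactly the local-Taylor-plus-compactness argument you give, and the resulting bound $|d_1^k|\le F_{k+1}/\big((k+1)c_\gamma^{\,k+1}\big)$ yields a constant depending only on $f,\gamma,k$.

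The paper, however, proceeds differently. Instead of a chord--arc comparison, it integrates by parts once in the numerator of \eqref{eq:lemma1} to write
\[
  d_1^{k}(f|z_1,z_2)=\frac{f^{(k+1)}(z_1)}{k+1}-\frac{1}{k+1}\,\frac{\int_{z_2}^{z_1}(z-z_2)^{k+1}f^{(k+2)}(z)\,dz}{(z_1-z_2)^{k+1}},
\]
and then bounds the second quotient uniformly by a finite number $M_{\gamma,k}$ (finiteness following from an L'Hospital computation showing the quotient tends to $0$ as $z_1\to z_2$), giving $C_{\gamma,k}=\sup_\gamma|f^{(k+1)}|+M_{\gamma,k}$. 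The trade-offs are as follows. Your constant cleanly factors into an analytic part ($\sup_\gamma|f^{(k+1)}|$) and a purely geometric part ($c_\gamma^{-(k+1)}$), and uses one fewer derivative of $f$; on the other hand, the geometric factor $c_\gamma^{-(k+1)}$ grows exponentially in $k$ whenever the arc is not straight, so that $C_\gamma=\max_k C_{\gamma,k}$ carries an extra $c_\gamma^{-n}$ into the main theorem. The paper's constant is less explicit (the remainder $M_{\gamma,k}$ is a supremum of a ratio involving $f^{(k+2)}$), but it has the pleasant limiting behaviour $C_{\gamma,k}\to\sup_\gamma|f^{(k+1)}|$ as $\text{diam}(\gamma)\to0$, and the integration-by-parts identity it produces is precisely the mechanism reused in the proof of Theorem~\ref{thm:theorem1} to derive the recursion \eqref{eq:hypo2}.
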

\begin{proof}
    The main idea of this proof is to use the representation in \eqref{eq:lemma1} 
    and then apply integration by parts.
    Indeed, suppose $z_1\neq z_2$, we deduce from \eqref{eq:lemma1} that,
    for $1\leq k \leq n-1$,
    \begin{equation}
    \label{eq:d1k}
    \begin{aligned}
    d_1^{k}(f|z_1,z_2) 
    &= \dfrac{\int_{z_2}^{z_1} (z-z_2)^{k} f^{(k+1)}(z)dz}{(z_1-z_2)^{k+1}} \\
    &= \dfrac{\frac{(z_1-z_2)^{k+1}}{k+1}f^{(k+1)}(z_1) - 
        \frac{1}{k+1}\int_{z_2}^{z_1} (z-z_2)^{k+1} f^{(k+2)}(z)dz}{(z_1-z_2)^{k+1}} \\
    &= \dfrac{f^{(k+1)}(z_1)}{k+1} -
     \dfrac{1}{k+1}\dfrac{\int_{z_2}^{z_1} (z-z_2)^{k+1} f^{(k+2)}(z)dz}{(z_1-z_2)^{k+1}}.
    \end{aligned}
    \end{equation}
    By using L'Hospital's Rule as in \eqref{eq:lopi},
    we find that 
    \[
    \begin{aligned}
        \lim_{z_1\to z_2} \dfrac{\int_{z_2}^{z_1} (z-z_2)^{k+1} f^{(k+2)}(z)dz}{(z_1-z_2)^{k+1}}
        &= \lim_{z_1\to z_2} \dfrac{(z_1-z_2)^{k+1} f^{(k+2)}(z_1)}{(k+1)(z_1-z_2)^k} \\
        &= \lim_{z_1\to z_2} (z_1-z_2)\frac{f^{(k+2)}(z_1)}{k+1}  = 0,
    \end{aligned}
    \]
    since $f^{(k+2)}$ is continuous on $\gamma$.
    Thus we have 
     \[
         M_{\gamma,k} := \sup_{z_1,z_2\in\gamma,z_1\neq z_2} 
    \left\lvert\dfrac{\int_{z_2}^{z_1} (z-z_2)^{k+1} f^{(k+2)}(z)dz}{(z_1-z_2)^{k+1}}\right\rvert < \infty.
     \]
    Hence it follows from \eqref{eq:d1k} that 
    \[
        |d_1^{k}(f|z_1,z_2)| \leq \dfrac{\sup_{z\in\gamma}|f^{(k+1)}(z)|+M_{\gamma,k}}{k+1}
        = \dfrac{C_{\gamma,k}}{k+1},
    \]
    where 
    \begin{equation}
    \label{eq:ck}
    \begin{aligned}
        C_{\gamma,k} &= \sup_{z\in\gamma}|f^{(k+1)}(z)|+M_{\gamma,k} \\
        &= \sup_{z\in\gamma}|f^{(k+1)}(z)| +
 \sup_{z_1,z_2\in\gamma,z_1\neq z_2} 
    \left\lvert\frac{\int_{z_2}^{z_1} (z-z_2)^{k+1} f^{(k+2)}(z)dz}{(z_1-z_2)^{k+1}}\right\rvert
    \end{aligned}
    \end{equation}
    only depends on $f,\gamma,k$, and $C_{\gamma,k}\to\sup_{z\in\gamma}|f^{(k+1)}(z)|$
    as $\text{diam}(\gamma)\to 0.$
\end{proof}

In order to estimate $|d_n|$ using \eqref{eq:relation},
we need the following elementary result.    
\begin{lemma}
    \label{lm:ineq}
    Let $\{I_{k,n}\}_{k,n=0}^\infty$ be a double sequence of nonnegative numbers satisfying 
    \[
    \begin{aligned}
    I_{1,n} &\leq \dfrac{C}{n+1}\\
    I_{k,n} &\leq \frac{1}{n+1}(I_{k-1,n+1} + L_k I_{k-1,n+2}),\quad k=2,\dots,\;\; n=0,1,\dots,
    \end{aligned}
    \]
    where for each $k$, $L_k$ is a nonnegative constant.
    Then 
    \begin{equation}
    \label{eq:ineq}
        I_{n,0} \leq \dfrac{C\prod_{k=2}^n (1+L_k)}{n!}, \quad \forall n \geq 2.
    \end{equation}
\end{lemma}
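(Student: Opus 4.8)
The plan is to prove a stronger, fully explicit bound by induction on the first index $k$, with the second index $n$ left free throughout. Concretely, I would establish that for every $k \geq 1$ and every $n \geq 0$,
\[
    I_{k,n} \leq \dfrac{C\prod_{j=2}^{k}(1+L_j)}{(n+1)(n+2)\cdots(n+k)},
\]
where the empty product (the case $k=1$) is read as $1$. Specializing to $n=0$ and then renaming $k$ as $n$ gives $I_{n,0} \leq C\prod_{k=2}^{n}(1+L_k)/n!$, which is exactly \eqref{eq:ineq}. An equivalent route is to introduce the auxiliary double sequence $\hati_{k,n}$ defined by the same two relations with equality in place of $\leq$, observe that $I_{k,n} \leq \hati_{k,n}$ for all $k,n$ by a one-line induction using nonnegativity and the monotonicity of the right-hand side in each $I_{k-1,\cdot}$, and then solve for the closed form of $\hati_{k,n}$; I would present the direct version to avoid carrying an extra symbol.

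The base case $k=1$ is precisely the hypothesis $I_{1,n}\leq C/(n+1)$, which holds for all $n$. For the inductive step, assume the displayed bound for $k-1$ and every value of the second index. Using the recursion and then the inductive hypothesis with second index $n+1$ in the first term and $n+2$ in the second term, we get
\[
    I_{k,n} \leq \dfrac{1}{n+1}\left(\dfrac{C\prod_{j=2}^{k-1}(1+L_j)}{(n+2)\cdots(n+k)} + L_k\,\dfrac{C\prod_{j=2}^{k-1}(1+L_j)}{(n+3)\cdots(n+k+1)}\right).
\]
Both denominators are products of $k-1$ consecutive integers; the second, $(n+3)(n+4)\cdots(n+k+1)$, equals the first, $(n+2)(n+3)\cdots(n+k)$, multiplied by $(n+k+1)/(n+2)\geq 1$, so the second reciprocal is at most the first. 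Pulling out the common quantity and using $(1+L_k)\prod_{j=2}^{k-1}(1+L_j)=\prod_{j=2}^{k}(1+L_j)$, the right-hand side becomes $C\prod_{j=2}^{k}(1+L_j)/[(n+1)(n+2)\cdots(n+k)]$, which is exactly the claimed bound for index $k$; this closes the induction.

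I do not expect a genuine obstacle here: the argument is elementary. The one place that warrants care is the bookkeeping of the two shifted denominators in the inductive step — checking that each really contains $k-1$ factors and that the comparison $(n+3)\cdots(n+k+1)\geq(n+2)\cdots(n+k)$ is used as an inequality, since there is no cancellation making it an equality. After the strengthened statement is proved, the conclusion for $I_{n,0}$ is an immediate specialization; the hypothesis $n\geq 2$ in the statement is needed only so that the product $\prod_{k=2}^{n}(1+L_k)$ is nonempty and the second line of the recursion has been invoked at least once.
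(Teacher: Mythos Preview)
Your proof is correct and rests on the same idea as the paper's: both use that the bound is monotone decreasing in the second index to replace the $I_{k-1,n+2}$ contribution by the $I_{k-1,n+1}$ one, producing the factor $1+L_k$. Your packaging is a bit more direct---you prove the explicit closed form $I_{k,n}\le C\prod_{j=2}^{k}(1+L_j)\big/[(n+1)\cdots(n+k)]$ in a single induction on $k$, whereas the paper introduces an auxiliary sequence $\hat{I}_{k,n}$ satisfying the recursion with equality, proves $\hat{I}_{k,n+1}\le\hat{I}_{k,n}$ as a separate induction, and then iterates $\hat{I}_{k,n}\le\frac{1+L_k}{n+1}\hat{I}_{k-1,n+1}$ down to $\hat{I}_{1,n-1}$.
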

\begin{proof}
    We first define an upper bound $\hati_{k,n}$ of $I_{k,n}$ as follows.
    Let the double sequence $\{\hati_{k,n}\}_{k,n=0}^\infty$ be given by
    \[
    \begin{aligned}
    \hati_{1,n} &= \dfrac{C}{n+1}\\
    \hati_{k,n} &= \frac{1}{n+1}(\hati_{k-1,n+1} + L_k \hati_{k-1,n+2}),\quad k=2,\dots, n=0,1,\dots.
    \end{aligned}
    \]
    It is easy to verify by induction on $k$ that 
    $I_{k,n}\leq \hati_{k,n}$. 
    Thus it suffices to bound $\hati_{n,0}$.

    We observe that $\hati_{k,n+1}\leq \hati_{k,n}$.
    Indeed, for $k=1$, 
    $\hati_{1,n+1}=\frac{C}{n+2} \leq \frac{C}{n+1}= \hati_{1,n}$ by definition. 
    Assume the inequality holds with first index $k-1$.
    Then we see from the definition of $\hati_{k,n}$ and the hypothesis for $k-1$ that 
    \[
        \hati_{k,n+1} = \frac{1}{n+2}(\hati_{k-1,n+2} + L_k \hati_{k-1,n+3})
        \leq \frac{1}{n+1}(\hati_{k-1,n+1} + L_k \hati_{k-1,n+2}) = \hati_{k,n}.
    \]
    Hence the above induction implies that $\hati_{k,n+1}\leq \hati_{k,n}$ for all indices.
    
    Consequently, we have 
    \[
    \begin{aligned}
        \hati_{k,n} 
        &= \frac{1}{n+1}(\hati_{k-1,n+1} + L_k \hati_{k-1,n+2}) \\
        &\leq \frac{1}{n+1}(\hati_{k-1,n+1} + L_k \hati_{k-1,n+1}) 
        = \dfrac{1+L_k}{n+1} \hati_{k-1,n+1}.
    \end{aligned}
    \]
    Now we are able to estimate $\hati_{n,0}$ by induction below.
    \[
    \begin{aligned}
        \hati_{n,0} &\leq \dfrac{(1+L_n)}{1} \hati_{n-1,1} \\
        &\leq \dfrac{(1+L_n)(1+L_{n-1})}{2!} \hati_{n-2,2} \\
        &\leq \dots \leq \dfrac{\prod_{k=2}^n (1+L_k)}{(n-1)!} \hati_{1,n-1}
        = \dfrac{C\prod_{k=2}^n (1+L_k)}{n!}, \quad \forall n \geq 2.
    \end{aligned}
    \]
    The inequality \eqref{eq:ineq} then follows since $I_{n,0}\leq \hati_{n,0}.$ 
\end{proof}

We are now in a position to state the main result.
\begin{theorem}
\label{thm:theorem1}
    Let $f$ be a function defined on an admissible Jordan arc $\gamma$ 
    such that $f^{(n+1)}$ exists and is continuous on $\gamma$.
    Then 
    \begin{equation}
    \label{eq:upperbound}
        |d_n(f|z_1,\dots,z_{n+1})| \leq  
        \dfrac{C_\gamma\prod_{k=2}^n (1+L_k)}{n!} \leq
        \dfrac{C_\gamma \left( 1+\text{diam}(\gamma)  \right)^{n-1}}{n!}, \quad \forall n \geq 2,
    \end{equation}
    where $z_1,\dots,z_{n+1}\in\gamma$ are distinct, 
    $L_k = |z_1-z_{k+1}|$,
    $\text{diam}(\gamma) := \max_{u,v\in\gamma} |u-v|$ and 
    \[
        C_\gamma = \max_{1\leq k\leq n-1} C_{\gamma,k}
    \]
    with $C_{\gamma,k}$ defined in \eqref{eq:ck} independent of $z_1,\dots,z_{n+1}$ and the parametrization of $\gamma$.
    Furthermore,
    if $\gamma$ is an admissible Jordan curve 
    then \eqref{eq:upperbound} still holds.
\end{theorem}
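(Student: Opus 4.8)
The plan is to turn \eqref{eq:relation} into a two-index recursion of exactly the shape treated by Lemma \ref{lm:ineq}, use Lemma \ref{lm:d1k} as the base case, read off \eqref{eq:upperbound}, and finally reduce the Jordan curve assertion to the arc case.

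Set $I_{k,m}:=|d_k^m(f|z_1,z_2,\dots,z_{k+1})|$ for the relevant $k,m$, keeping $z_1$ as the distinguished (differentiated) node. The hypothesis that $f^{(n+1)}$ exists and is continuous --- one derivative more than Section \ref{sec:property} strictly requires --- lets us run the argument leading to \eqref{eq:relation} with smoothness budget $n+1$, so that every $d_k^m$ occurring below is well defined, uniformly bounded, and satisfies \eqref{eq:relation}; in particular each $I_{k,m}$ is finite. Lemma \ref{lm:d1k} then gives $I_{1,m}\le C_{\gamma,m}/(m+1)\le C_\gamma/(m+1)$ for $m=1,\dots,n-1$, which is precisely the base hypothesis of Lemma \ref{lm:ineq} with $C=C_\gamma$.

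The core step is to establish
\[
I_{k,m}\ \le\ \frac{1}{m+1}\bigl(I_{k-1,m+1}+L_k\,I_{k-1,m+2}\bigr),\qquad L_k=|z_1-z_{k+1}|.
\]
Starting from \eqref{eq:relation} and integrating by parts exactly as in the proof of Lemma \ref{lm:d1k}, the boundary term contributes $d_{k-1}^{m+1}(f|z_1,z_2,\dots,z_k)/(m+1)$, while the surviving integral --- weighted by $(z-z_{k+1})^{m+1}$ and divided by $(z_1-z_{k+1})^{m+1}$ --- is to be rewritten, by one further use of \eqref{eq:relation}, as a level-$(k-1)$ divided difference of order $m+2$ times the single extra factor $L_k=|z_1-z_{k+1}|$. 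Taking absolute values gives the displayed inequality. Feeding it together with the base case into Lemma \ref{lm:ineq} yields $|d_n|=I_{n,0}\le C_\gamma\prod_{k=2}^{n}(1+L_k)/n!$, which is the first inequality of \eqref{eq:upperbound}. The second is immediate: $L_k=|z_1-z_{k+1}|\le\text{diam}(\gamma)$ for each of the $n-1$ indices $k=2,\dots,n$, whence $\prod_{k=2}^{n}(1+L_k)\le(1+\text{diam}(\gamma))^{n-1}$; and $C_\gamma$, being assembled from $\sup_\gamma|f^{(k+1)}|$ and the suprema in \eqref{eq:ck}, depends on $f$ and $\gamma$ alone, not on the nodes or the parametrization.

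For an admissible Jordan curve $\gamma$, remove from it a small open sub-arc containing no node; the remainder $\tilde\gamma$ is an admissible Jordan arc (the parametrization stays $C^1$ with non-vanishing derivative throughout) that still contains $z_1,\dots,z_{n+1}$. By \eqref{eq:dvlag} the divided difference depends only on the values $f(z_1),\dots,f(z_{n+1})$, so it is the same computed on $\tilde\gamma$ as on $\gamma$; since $\tilde\gamma\subseteq\gamma$ one has $\text{diam}(\tilde\gamma)\le\text{diam}(\gamma)$ and the constants in \eqref{eq:ck} for $\tilde\gamma$ are dominated by those for $\gamma$, so the arc bound transfers. The step I expect to be the main obstacle is the core recursion: arranging the integration-by-parts bookkeeping so the remainder is a \emph{level-$(k-1)$} divided difference (of order $m+2$), as Lemma \ref{lm:ineq} demands, rather than a level-$k$ one, and doing this without letting any arc-length- or shape-dependent factor slip in beyond those already absorbed into the $C_{\gamma,k}$ --- precisely where the promised "direct", parametrization-free estimate has to be executed with care, and the reason the extra derivative $f^{(n+1)}$ is assumed, since the recursion's right-hand side involves order-$(m+2)$, level-$(k-1)$ divided differences whose existence and uniform boundedness must first be secured.
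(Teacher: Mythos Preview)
Your proposal is correct and follows essentially the same route as the paper: define $I_{k,m}=|d_k^m|$, use Lemma~\ref{lm:d1k} for the base case $I_{1,m}\le C_\gamma/(m+1)$, integrate \eqref{eq:relation} by parts to obtain the two-index recursion $I_{k,m}\le\frac{1}{m+1}(I_{k-1,m+1}+L_k I_{k-1,m+2})$, invoke Lemma~\ref{lm:ineq}, and reduce the Jordan-curve statement to the arc case by passing to a sub-arc containing all nodes and using $C_{\tilde\gamma}\le C_\gamma$. The concern you flag --- arranging the remainder as a level-$(k-1)$ quantity --- is exactly the computation the paper carries out in its display \eqref{eq:hypo2}, so your plan matches the paper's proof step for step.
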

\begin{proof}
    For a fixed set of points $z_1,\dots,z_{n+1}\in\gamma$,
    we define 
    \[
        I_{k,m} = |d_k^m(f|z_1,\dots,z_{n+1})|, \quad k=1,2,\dots,n,\; m=0,1,\dots,n-k.
    \]
    Let $C_\gamma = \max_{1\leq k\leq n-1} C_{\gamma,k}$ with $C_{\gamma,k}$ given in \eqref{eq:ck}.
    From Lemma \ref{lm:d1k}, we know that 
    \begin{equation}
    \label{eq:hypo1}
        I_{1,m} = |d_1^m| \leq \dfrac{C_{\gamma,m}}{m+1} \leq \dfrac{C_\gamma}{m+1}, \quad m=1,2,\dots,n-1.
    \end{equation}
    
    More generally, since $f^{(n+1)}$ is continuous on $\gamma$ (hence $f^{(n)}$ is Lipschitz continuous),
    the discussion in previous section shows that $d_k^m(f|\phi(t_1),z_2,\dots,z_k)$
    is absolutely continuous in $t_1\in [0,1]$ as long as $k+m\leq n,$
    from which integration by parts is justified.
    Therefore, based on \eqref{eq:relation},
    integration by parts as in \eqref{eq:d1k} yields that 
    \begin{equation}
    \label{eq:hypo2}
    \begin{aligned}
        I_{k,m} &= 
\left\lvert\dfrac{\int_{z_{k+1}}^{z_1} (z-z_{k+1})^{m} d_{k-1}^{m+1}(f|z,z_2,\dots,z_k)dz}{(z_1-z_{k+1})^{m+1}}\right\rvert \\
    &= \left\lvert\dfrac{d_{k-1}^{(m+1)}(f|z_1,\dots,z_k)}{m+1} -
     \dfrac{z_1-z_{k+1}}{m+1}
    \dfrac{\int_{z_{k+1}}^{z_1} (z-z_{k+1})^{m+1} d_{k-1}^{(m+2)}(f|z,z_2,\dots,z_k)dz}{(z_1-z_{k+1})^{m+2}} \right\rvert \\
    &= \left\lvert\dfrac{d_{k-1}^{(m+1)}(f|z_1,\dots,z_k)}{m+1} -
     \dfrac{z_1-z_{k+1}}{m+1} d_{k-1}^{m+2}(f|z_1,\dots,z_k) \right\rvert \\
    &\leq \dfrac{1}{m+1}(I_{k-1,m+1} + L_k I_{k-1,m+2}), \quad k=1,2,\dots,n,\; m = 0,1,\dots,n-k,
    \end{aligned}
    \end{equation}
    where $L_k := |z_1-z_{k+1}|$.
    It follows from \eqref{eq:hypo1}, \eqref{eq:hypo2} that 
    the assumptions in Lemma \ref{lm:ineq} are satisfied by $I_{k,m}$.
    With the help of \eqref{eq:ineq},
    we have
    \begin{equation}
        \label{eq:goodbound}
        \begin{aligned}
        |d_n(f|z_1,\dots,z_{n+1})| = |d_n^0|= I_{n,0} & \leq 
        \dfrac{C_{\gamma}\prod_{k=2}^n (1+L_k)}{n!} \\
    & \leq \dfrac{C_\gamma \left( 1+\text{diam}(\gamma)  \right)^{n-1}}{n!}, 
    \quad \forall n \geq 2,
        \end{aligned}
    \end{equation}
    which establishes \eqref{eq:upperbound}.

    Since
    both the integral given in \eqref{eq:inte} 
    and the derivative defined in \eqref{eq:deriv} are independent of 
    the parametrization of $\gamma$,
    we see that 
    $C_{\gamma,k}$ in \eqref{eq:ck} is independent of the parametrization 
    of $\gamma$.

    Suppose now $\gamma$ is a Jordan curve satisfying the hypothesis in 
    the claim
    and assume that we fix the orientation of the curve.
    Let $\gamma_0$ be an Jordan arc on $\gamma$ passing through 
    the nodes $z_1,\dots,z_{n+1}$.
    Then it is easily seen that the bound in \eqref{eq:goodbound} still holds 
    if we replace $\gamma$ in \eqref{eq:goodbound} by $\gamma_0$. 
    Furthermore, it follows immediately from \eqref{eq:ck} that
    $C_{\gamma_0} \leq C_{\gamma}$ if $\gamma_0\subset \gamma$,
    where $C_{\gamma_0}=\max_{1\leq k\leq n-1} C_{\gamma_0,k}$.
    Hence \eqref{eq:upperbound} follows, which completes the proof. 
\end{proof}

Note that divided difference is invariant under any permutation of nodes 
while the bound involving $L_k=|z_1-z_{k+1}|$ in \eqref{eq:upperbound} depends on 
the ordering of $z_1,\dots,z_{n+1}$.
We can then find a sharper bound by 
permuting the nodes to minimize the corresponding quantity in \eqref{eq:upperbound}
involving $L_k=|z_1-z_{k+1}|$.
\begin{corollary}
    Let $f,\gamma$ be given as in Theorem \ref{thm:theorem1}.
    For distinct nodes $z_1, \dots,$
    $ z_{n+1} \in \gamma$, we have 
    \begin{equation}
        \label{eq:sharper}
        |d_n(f|z_1,\dots,z_{n+1})| \leq  
        \dfrac{C_\gamma \min_{\sigma\in \mathcal{S}_{n+1}} \prod_{k=2}^n (1+|z_{\sigma(1)}-z_{\sigma(k+1)}|)}{n!}  ,\quad \forall n \geq 2,
    \end{equation}
    where $\mathcal{S}_{n+1}$ denotes the symmetric group of degree $n+1$ and
    $C_\gamma$ is given in Theorem \ref{thm:theorem1}. 
\end{corollary}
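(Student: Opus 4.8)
The plan is to derive \eqref{eq:sharper} directly from Theorem \ref{thm:theorem1} by exploiting the permutation invariance of the divided difference, applying the theorem separately to each reordering of the given node set.

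First I would recall that, as observed right after \eqref{eq:dvlag}, the number $d_n(f|z_1,\dots,z_{n+1})$ is unchanged under any permutation of its arguments, hence so is its modulus: for every $\sigma\in\mathcal{S}_{n+1}$ one has $|d_n(f|z_1,\dots,z_{n+1})| = |d_n(f|z_{\sigma(1)},\dots,z_{\sigma(n+1)})|$. Next I would apply Theorem \ref{thm:theorem1} to the ordered tuple $(z_{\sigma(1)},\dots,z_{\sigma(n+1)})$, which consists of $n+1$ distinct points of the same admissible Jordan arc $\gamma$ and therefore satisfies the hypotheses of the theorem. With the quantity $L_k$ in \eqref{eq:upperbound} now reading $|z_{\sigma(1)}-z_{\sigma(k+1)}|$, the bound gives
\[
|d_n(f|z_{\sigma(1)},\dots,z_{\sigma(n+1)})| \le \frac{C_\gamma \prod_{k=2}^{n}\bigl(1+|z_{\sigma(1)}-z_{\sigma(k+1)}|\bigr)}{n!}.
\]

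The one point that must be checked carefully is that the prefactor $C_\gamma=\max_{1\le k\le n-1}C_{\gamma,k}$ is \emph{the same} constant for every $\sigma$: by the definition \eqref{eq:ck}, $C_{\gamma,k}$ depends only on $f$, $\gamma$ and $k$, not on the nodes or on their ordering. Combining this with the invariance from the previous step, the estimate
\[
|d_n(f|z_1,\dots,z_{n+1})| \le \frac{C_\gamma \prod_{k=2}^{n}\bigl(1+|z_{\sigma(1)}-z_{\sigma(k+1)}|\bigr)}{n!}
\]
holds for \emph{every} $\sigma\in\mathcal{S}_{n+1}$. Since $\mathcal{S}_{n+1}$ is finite, the infimum of the right-hand side over $\sigma$ is attained, and taking that minimum yields \eqref{eq:sharper}.

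I do not expect a genuine obstacle here; the corollary is an immediate consequence of Theorem \ref{thm:theorem1} together with permutation invariance. The only things to be attentive to are that the invariance is genuinely available for the modulus $|d_n|$ (it is, being inherited from that of $d_n$) and that the minimization over $\sigma$ acts solely on the product $\prod_{k=2}^{n}(1+|z_{\sigma(1)}-z_{\sigma(k+1)}|)$, the factor $C_\gamma/n!$ being entirely independent of $\sigma$. One may further note, although it is not needed for the statement, that since only the values $\sigma(1)$ and $\sigma(2)$ enter the product, the minimum in \eqref{eq:sharper} in fact reduces to a minimum over ordered pairs of distinct nodes.
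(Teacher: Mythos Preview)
Your argument is correct and matches the paper's own reasoning, which simply observes (in the paragraph preceding the corollary) that $d_n$ is permutation invariant while the bound in Theorem~\ref{thm:theorem1} depends on the ordering, so one may minimize over all orderings.

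One small slip in your closing aside: it is not that ``only $\sigma(1)$ and $\sigma(2)$ enter the product''---in fact $\sigma(2)$ is precisely the index that does \emph{not} appear, since $k+1$ ranges over $3,\dots,n+1$. Your conclusion is nonetheless right: once $\sigma(1)$ and $\sigma(2)$ are fixed, the product equals $\prod_{j\notin\{\sigma(1),\sigma(2)\}}(1+|z_{\sigma(1)}-z_j|)$, which is independent of the remaining values of $\sigma$, so the minimum is indeed over ordered pairs $(\sigma(1),\sigma(2))$.
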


With an estimate of divided difference above,
an extension of \eqref{eq:error0} for polynomial interpolation error in complex plane 
readily follows.
\begin{theorem}
    \label{thm:theorem2}
    Let $f$ be a function defined on an admissible Jordan arc $\gamma$ 
    such that $f^{(n+1)}$ ($n\geq 2$) exists and is continuous on $\gamma$.
    If $p_n(z)$ interpolates $f$ at the $n+1$ nodes $z_1,\dots,z_{n+1}\in\gamma$,
    then we have the error estimate
    \begin{equation}
    \label{eq:error2}
        \left\lvert f(z)-p_n(z)\right\rvert \leq 
        \dfrac{C_\gamma \min_{\sigma\in \mathcal{S}_{n+2}} 
        \prod_{k=2}^{n+1} (1+|z_{\sigma(1)}-z_{\sigma(k+1)}|)}{(n+1)!}
        \prod_{k=1}^{n+1} \left\lvert{z-z_k}\right\rvert,
    \end{equation}
    where we have set $z_{n+2}=z$ ,  $C_\gamma := \max_{1\leq k\leq n-1} C_{\gamma,k}$
    with $C_{\gamma,k}$ defined in \eqref{eq:ck} independent of $z_1,\dots,z_{n+1}$,
    and $\mathcal{S}_{n+2}$ denotes the symmetric group of degree $n+2$.
    Furthermore,
    if $\gamma$ is an admissible Jordan curve   
    then \eqref{eq:error2} still holds. 
\end{theorem}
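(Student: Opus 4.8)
The plan is to convert the interpolation error into a single divided difference via Newton's remainder formula and then feed that divided difference to the bound of Theorem \ref{thm:theorem1} one order higher. Here $z\in\gamma$, so that $f(z)$ is defined; if $z$ happens to coincide with one of the interpolation nodes the asserted inequality is trivial, since then $p_n(z)=f(z)$ kills the left side while the factor $|z-z_j|=0$ kills the right side. So assume $z$ is distinct from $z_1,\dots,z_{n+1}$ and set $z_{n+2}:=z$. Applying \eqref{eq:newton1} with $p_n$ the degree-$n$ interpolant of $f$ at $z_1,\dots,z_{n+1}$ gives
\[
    f(z)-p_n(z)=d_{n+1}(f|z_1,\dots,z_{n+1},z_{n+2})\prod_{k=1}^{n+1}(z-z_k),
\]
so after taking moduli the task reduces to bounding $|d_{n+1}(f|z_1,\dots,z_{n+2})|$, a divided difference of order $n+1$ whose $n+2$ nodes all lie on the admissible arc $\gamma$.

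For that bound I would rerun the proof of Theorem \ref{thm:theorem1} with $n$ replaced by $n+1$: put $I_{k,m}=|d_k^m(f|z_1,\dots,z_{n+2})|$, obtain the base estimates $I_{1,m}\le C_{\gamma,m}/(m+1)$ from the order-$(n+1)$ form of Lemma \ref{lm:d1k}, derive the two-term recursion $I_{k,m}\le\frac{1}{m+1}\bigl(I_{k-1,m+1}+L_k I_{k-1,m+2}\bigr)$ with $L_k=|z_1-z_{k+1}|$ from \eqref{eq:relation} together with the integration-by-parts computation in \eqref{eq:hypo2}, and then apply Lemma \ref{lm:ineq} to conclude $|d_{n+1}(f|z_1,\dots,z_{n+2})|\le C_\gamma\prod_{k=2}^{n+1}(1+L_k)/(n+1)!$. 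Because divided differences are invariant under permutations of their nodes (cf.\ \eqref{eq:dvlag}), the same bound holds with $z_1,\dots,z_{n+2}$ replaced by any reordering; minimizing over $\sigma\in\mathcal{S}_{n+2}$ (with $z_{n+2}=z$) yields exactly the factor displayed in \eqref{eq:error2}, and multiplying back by $\prod_{k=1}^{n+1}|z-z_k|$ finishes the Jordan-arc case.

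The step I expect to need the most care is matching the hypotheses. Running the argument of Theorem \ref{thm:theorem1} one order higher means the base step uses the order-$(n+1)$ version of Lemma \ref{lm:d1k}, and the integration by parts there reaches $f^{(n+2)}$, while Theorem \ref{thm:theorem2} assumes only $f^{(n+1)}\in C^0(\gamma)$ and asserts the smaller constant $C_\gamma=\max_{1\le k\le n-1}C_{\gamma,k}$. I would pin down first whether the top level of the recursion can instead be closed using the weaker \emph{uniform boundedness} half of the boundedness theorem at the end of Section \ref{sec:property} — that statement requires only $f^{(n)}\circ\phi$ to be Lipschitz, which does follow from $f^{(n+1)}$ being continuous on the compact arc $\gamma$ (so that $(f^{(n)}\circ\phi)'=(f^{(n+1)}\circ\phi)\,\phi'$ is bounded on $[0,1]$) — or whether the hypothesis should in fact carry one further derivative. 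This is precisely the place where the \emph{direct} approach pays off: closing that top step by a crude appeal to Lemma \ref{lm:lemma2} would cost a factor $1/(n+1)$ and a less explicit, parametrization-sensitive constant.

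The Jordan-curve case then follows as in the proof of Theorem \ref{thm:theorem1}: fix an orientation, pick a Jordan sub-arc $\gamma_0\subset\gamma$ passing through all of $z_1,\dots,z_{n+1}$ and $z$, apply the arc estimate on $\gamma_0$, and observe $C_{\gamma_0}\le C_\gamma$ directly from the definition \eqref{eq:ck}, since the integral \eqref{eq:inte} and the derivative \eqref{eq:deriv} along $\gamma$ restrict to $\gamma_0$. No genuinely new difficulty appears in that reduction.
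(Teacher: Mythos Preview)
Your approach is exactly the paper's: express the remainder via \eqref{eq:newton1} and then bound the resulting $(n+1)$st-order divided difference by the permutation-minimized estimate \eqref{eq:sharper} taken one order higher; the Jordan-curve reduction is also identical.

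The concern you flag is real and is not resolved in the paper either. The paper's proof simply cites \eqref{eq:sharper} for $d_{n+1}(f|z_1,\dots,z_{n+2})$, but a direct application of Theorem~\ref{thm:theorem1} (hence of \eqref{eq:sharper}) at order $n+1$ would require $f^{(n+2)}$ continuous on $\gamma$ and would produce the constant $\max_{1\le k\le n} C_{\gamma,k}$, not $\max_{1\le k\le n-1} C_{\gamma,k}$. The statement of Theorem~\ref{thm:theorem2} as written thus carries a hypothesis and a constant that are each one unit off from what the cited argument actually delivers; your instinct to try to close the top level with only the Lipschitz/boundedness half of the Section~\ref{sec:property} machinery is sensible, but the paper does not do this, and with only $f^{(n+1)}\in C^0(\gamma)$ the integration-by-parts step in \eqref{eq:d1k} at $k=n$ is not available, so $C_{\gamma,n}$ is not even defined by \eqref{eq:ck}. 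Treat this as a minor misstatement in the paper rather than a defect in your plan.
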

\begin{proof}
    This is an immediate result of Newton interpolation formula \eqref{eq:newton1} and 
    an estimate for 
    $|d_n(f|z_1,\dots,z_{n+1},z_{n+2})|$ using \eqref{eq:sharper}. 
\end{proof}


\bibliography{cdfeng}
\bibliographystyle{plain}
\end{document}